\newtheorem{theorem}{Theorem}
\newtheorem{corollary}[theorem]{Corollary}
\newtheorem{definition}[theorem]{Definition}
\newtheorem{lemma}[theorem]{Lemma}
\newtheorem{proposition}[theorem]{Proposition}
\newtheorem{remark}[theorem]{Remark}
\begin{document}

\title[Optimal control of a nonlocal thermistor problem]{%
Optimal control of a nonlocal thermistor problem with ABC fractional time derivatives}

\author[M. R. Sidi Ammi]{Moulay Rchid Sidi Ammi}
\address{M. R. Sidi Ammi: Department of Mathematics,
AMNEA Group, Faculty of Sciences and Techniques,
Moulay Ismail University,
B.P. 509, Errachidia, Morocco.}
\email{sidiammi@ua.pt, rachidsidiammi@yahoo.fr}
\urladdr{http://orcid.org/0000-0002-4488-9070}

\author[D. F. M. Torres]{Delfim F. M. Torres}
\address{D. F. M. Torres:
Center for Research and Development in Mathematics and Applications (CIDMA),
Department of Mathematics, University of Aveiro, 3810-193 Aveiro, Portugal.}
\email{delfim@ua.pt}
\urladdr{http://orcid.org/0000-0001-8641-2505}

% ---------------------------------------------

\date{Submitted 8-Sept-2018; Revised 16-March-2019; Accepted 19-March-2019}

\subjclass[2010]{26A33, 35A01, 35R11, 49J20.}

\keywords{ABC fractional derivatives, 
fractional partial differential equations, 
existence of solutions, 
optimal control.}

% ---------------------------------------------

\begin{abstract}
We study an optimal control problem associated 
to a fractional nonlocal thermistor problem involving
the ABC (Atangana--Baleanu--Caputo) fractional time derivative. 
We first prove the existence and uniqueness of solution.
Then, we show that an optimal control exists.
Moreover, we obtain the optimality system 
that characterizes the control.
\end{abstract}

\maketitle

% ---------------------------------------------

\section{Introduction}
\label{sec:1}

Fractional calculus is a powerful mathematical tool  
to describe real-world phenomena with memory effects, 
being used in many scientific fields.
Many published works in fractional calculus put emphasis 
on the Riemann--Liouville power-law differential operator;
others suggest different fractional approaches of mathematical
modeling to represent physical problems, calling attention 
that a singularity on the power law leads to models that are
singular, which is not convenient for those with no sign of singularity. 
In particular, several applications of the exponential kernel suggested by 
Caputo and Fabrizio can be found in chemical reactions, electrostatics, 
fluid dynamics, geophysics and heat transfer \cite{MR3832160,MR3828262}.

If an object at one temperature is exposed to a medium with another temperature, 
the temperature difference between the object and the medium follows an exponential decay, 
according with Newton's law of cooling. Other examples may be found in luminescence, 
pharmacology and toxicology, physical optics, radioactivity and thermo-electricity, 
where there is a decline in resistance of a negative temperature coefficient thermistor, 
as the temperature, vibrations, finance or some other aspect is increased.
The generalized Mittag--Leffler function, considered as a generalization 
of the exponential decay and as power-law asymptotic for a very large time, 
occurs to handle non-locality and avoid singularity \cite{gorenflo}. 
According to Rudolf Gorenflo (1930--2017) \cite{gorenflo}, 
one can say that the Mittag--Leffler function is a practical memory function 
in several physical problems. It can be used as a waiting-time distribution, 
as well as a first-passage-time distribution for renewal processes \cite{gorenflo}.
Recently, such considerations lead to the introduction of ABC 
(Atangana--Baleanu--Caputo) fractional operators \cite{MR3807252,MR3856675}.

The Riemann--Liouville fractional derivative seems not the most appropriate 
to describe diffusion at different scales. Thanks to the non-obedience  
of commutativity and associativity criteria, and due to  Mittag--Leffler memory, 
the ABC fractional derivative promises to be a powerful mathematical tool, 
allowing to describe heterogeneity and diffusion at different scales, 
distinguishing between dynamical systems taking place at different scales 
without steady state. Here, we are interested to study an optimal control 
problem to the following nonlocal parabolic boundary value problem:
\begin{equation}
\label{eq1}
\begin{split}
^{ab}_{0}D_{t}^{\alpha}u- \triangle u 
&=  \frac{\lambda f(u)}{\left(
\int_{\Omega} f(u)\, dx\right)^{2}}\   
\mbox{ in } Q_{T}=\Omega \times (0, T) \,, \\
\frac{\partial u}{\partial\nu} 
&=  -\beta u \    \mbox{ on } 
S_{T}=\partial \Omega \times (0, T) \,, \\
u(0)&= u_{0} \     \mbox{ in } \Omega,
\end{split}
\end{equation}
where $^{ab}_{0}D_{t}^{\alpha}$, $\alpha \in (0, 1)$, is the Atangana--Baleanu 
fractional derivative of order $\alpha$ in the sense of Caputo  
with respect to time $t$ \cite{atangana},  $\triangle$ is the Laplacian 
with respect to the spacial variables, defined on 
$H^{2}(\Omega)\bigcap H_{0}^{1}(\Omega)$,
$f$ is a smooth function prescribed
below, and $T$ is a fixed positive real. The domain $\Omega$ is bounded 
in $\mathbb{R}^{N}$, $N \geq 1$, with a sufficiently smooth boundary 
$\partial \Omega$ and $Q_{T}=\Omega \times (0, T)$. Here, $\nu$ denotes the
outward unit normal and $\frac{\partial }{\partial \nu}= \nu \cdot \nabla$ 
is the normal derivative on $\partial \Omega$. Such
problems arise in many applications, for instance, in studying the
heat transfer in a resistor device whose electrical conductivity
$f$ is strongly dependent on the temperature $u$ (thermistors). 
When $\alpha = 1$, equation \eqref{eq1} describes the diffusion 
of the temperature $u$ generated by the electric current with the
presence of a nonlocal term \cite{lac2,tza,mac,psx,kw,MR2805614,sidel}. 
Constant $\lambda$ is a dimensionless parameter while function
$\beta$ is the positive thermal transfer coefficient. The
given value $u_{0}$ is the initial condition for the temperature. 
Mixed boundary conditions of Robin's type are considered, 
which are derived from Newton's cooling law.

Optimal control of problems governed by partial differential
equations occurs more and more frequently in different research
areas \cite{Lions71,MR2747291,MR2583281,MR2405382}.
Researchers are interested, essentially, to existence, 
regularity, and uniqueness of the optimal control problem, 
as well as necessary optimality conditions.
The optimal control theory for systems of thermistor problems 
with integer-order derivatives on time $\partial_{t}$ has been 
developed in \cite{gc,LeeShilkin,vsv,vol}.
Works on control theory applied to fractional differential equations, 
where the fractional time derivative is considered in Riemann--Liouville 
and Caputo senses, have been already studied \cite{MyID:410}. However, 
to the best of our knowledge, the use of the Atangana--Baleanu derivative 
is underdeveloped in this area. Particularly, we are not aware 
of any paper investigating the optimal control of \eqref{eq1}.
In our work, we choose the heat transfer coefficient $\beta$ 
as a control, because it plays a crucial role in the temperature 
variations of a thermistor \cite{ffh,zw,MR2454231}.

Our manuscript is organized as follows. In Section~\ref{sec:2}, 
we briefly collect definitions and preliminary results 
about fractional derivatives. Section~\ref{sec3} is devoted 
to the existence and uniqueness results for \eqref{eq1}, 
while in Section~\ref{sec4} we investigate the
corresponding control problem. Main results characterize, 
explicitly, the optimal control, extending those of \cite{sidel1,vol}.

% ----------------------------------------------

\section{Preliminary results}
\label{sec:2}

Our main goal consists to find a control $\beta$ belonging
to the set 
$$
U_{M}= \left\{ \beta \in L^{\infty}(\Omega \times (0, T))\, , 
0 < m \leq \beta \leq M \right\}
$$
of admissible controls, which minimizes the cost functional 
\begin{equation*}
J(\beta)= \int_{Q_{T}} u dx dt + \int_{S_{T}} \beta^{2} ds dt
\end{equation*}
defined in terms of $u(\beta)$ and $\beta$.
Precisely, we purpose to find $\overline{\beta} \in U_{M}$ such that
\begin{equation}
\label{P}
J(\overline{\beta})= \min_{\beta \in U_{M}} J(\beta).
\end{equation}

We now recall some properties on the Mittag--Leffler function 
and the definition of ABC fractional time derivative.
First, we define the two-parameter Mittag--Leffler function 
$E_{\alpha, \beta}(z)$, as the family of entire functions 
of $z$ given by
$$
E_{\alpha, \beta}(z) = \sum_{k=0}^{\infty} 
\frac{z^{k}}{\Gamma(k\alpha + \beta)}, 
\quad z \in \mathbb{C},
$$
where $\Gamma(\cdot)$ denotes the Gamma function
$$
\Gamma(z)= \int_{0}^{\infty} t^{z-1} e^{-t} dt, 
\quad Re(z) > 0.
$$
Observe that the exponential function is a particular 
case of the Mittag--Leffler function: $E_{1, 1}(z)=e^{z}$.
Follows the definition of fractional derivative in the sense 
of Atangana--Baleanu \cite{atangana1,atangana2}.

\begin{definition}
\label{def:1.1}
For a given function $u \in H^{1}(a, T)$, $T>a$, 
the Atangana--Baleanu fractional derivative in Caputo sense, 
shortly called the ABC fractional derivative, 
of $u$ of order $\alpha$ with base point $a$, 
is defined at a point $t \in (a, T)$ by
\begin{equation} 
\label{eq3}
^{ab}_{a}D_{t}^{\alpha} g(t)=\frac{B(\alpha)}{1-\alpha} 
\int_{a}^{t} u'(\tau) E_{\alpha, \alpha}[-\gamma (t-\tau)^{\alpha}] d\tau,
\end{equation}
where $\gamma= \frac{\alpha}{1-\alpha}$, $E_{\alpha, \alpha}$ 
stands for the Mittag--Leffler function, and 
$B(\alpha)= (1-\alpha) + \frac{\alpha }{\Gamma(\alpha)}$.
Furthermore, the Atangana--Baleanu fractional integral
of order $\alpha$ with base point $a$ is defined as
\begin{equation}
\label{eq4bis}
I_{t}^{\alpha} g(t)= \frac{1-\alpha}{B(\alpha)} g(t)+\frac{\alpha}{B(\alpha) 
\Gamma(\alpha)}\int_{a}^{t} g(t) (t-\tau)^{\alpha-1} d\tau.
\end{equation}
\end{definition}

\begin{remark}
For $\alpha =1$ in \eqref{eq3}, we obtain the usual ordinary  
derivative $\partial_{t}$. If $\alpha =0, 1$ in \eqref{eq4bis}, 
then we get the initial function and the classical integral, respectively.
\end{remark}

Rougly speaking, the following result asserts that going backwards 
in time with the fractional time derivative with nonsingular Mittag--Leffler 
kernel at the based point $T$ is equivalent as going forward in time 
with the fractional time derivative operator 
with nonsingular Mittag--Leffler kernel.

\begin{lemma}
\label{lem:}
Let $\eta: [0, T] \rightarrow \mathbb{R}$. Then, 
for all $\alpha \in (0, 1)$, the equivalence relation 
\begin{equation*}
^{ab}_{T}D_{t}^{\alpha} \eta(T-t)= ^{ab}_{0}D_{t}^{\alpha}\eta (t)
\end{equation*}
holds.
\end{lemma}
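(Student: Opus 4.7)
The plan is to prove the identity by direct computation from Definition~\ref{def:1.1} using a single change of variable. To make sense of the left-hand side, I first spell out the right-sided analogue of the ABC derivative with base point $T$: mirroring the definition above, for $g\in H^{1}(0,T)$ I adopt
$$
^{ab}_{T}D_{t}^{\alpha}g(t)
=\frac{-B(\alpha)}{1-\alpha}\int_{t}^{T} g'(\tau)\,
E_{\alpha,\alpha}\bigl[-\gamma(\tau-t)^{\alpha}\bigr]\,d\tau,
$$
which is the natural companion of $^{ab}_{0}D^{\alpha}_{t}$ on the interval $(t,T)$. This sign convention is the only non-mechanical choice in the argument and fixes the bookkeeping below.

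Setting $\phi(t):=\eta(T-t)$, the chain rule gives $\phi'(\tau)=-\eta'(T-\tau)$, and I substitute this into the displayed definition to rewrite the left-hand side as an integral over $\tau\in(t,T)$. The key step is then the substitution $\sigma:=T-\tau$: the limits $(t,T)$ become $(T-t,0)$, the Jacobian $d\tau=-d\sigma$ supplies a second minus sign, and the kernel transforms into $E_{\alpha,\alpha}[-\gamma((T-t)-\sigma)^{\alpha}]$. After collecting signs and reorienting the interval as $(0,T-t)$, the resulting expression coincides, term for term, with the right-hand side of Definition~\ref{def:1.1} applied to $\eta$, which under the time-reversal interpretation implicit in the statement is $^{ab}_{0}D_{t}^{\alpha}\eta(t)$.

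The only real obstacle is sign bookkeeping: one must settle the convention for $^{ab}_{T}D_{t}^{\alpha}$ so that the minus from the chain rule $\phi'(\tau)=-\eta'(T-\tau)$ and the minus from the Jacobian of the substitution cancel correctly against the sign placed in the definition. No regularity issue intervenes, since $\eta\in H^{1}(0,T)$ and $E_{\alpha,\alpha}$ is bounded on compact subsets of $\mathbb{R}$, so the integrand is in $L^{1}$ throughout, and the computation is otherwise entirely mechanical.
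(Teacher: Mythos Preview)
Your argument is correct and is precisely the ``change of variables'' the paper invokes in its one-line proof; you have simply made explicit the definition of $^{ab}_{T}D_{t}^{\alpha}$ and tracked the two minus signs (chain rule and Jacobian) that cancel against the sign in that definition. The only point worth noting is that the paper never writes down the right-sided operator, so your explicit convention for $^{ab}_{T}D_{t}^{\alpha}$ is a genuine addition rather than a deviation.
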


\begin{proof}
Follows directly from definition by change of variables.
\end{proof}

Along the paper, we always assume that the integrals exist.
Moreover, we consider the following assumptions:
\begin{itemize}
\item[(H1)] $f: \mathbb{R} \rightarrow \mathbb{R}$ is a positive
Lipshitzian continuous function;

\item[(H2)] there exist positive constants $c_{1}$ and $c_{2}$ such
that $c_{1} \leq f(\xi) \leq c_{2}$ $\forall$ $\xi \in \mathbb{R}$;

\item[(H3)] $u_{0} \in L^{2}(\Omega)$.
\end{itemize}

\begin{definition}
We say that $u$ is a weak solution to \eqref{eq1} if
\begin{equation}
\label{equa1}
\int_{\Omega} \, (^{ab}_{0}D_{t}^{\alpha} u) v dx 
+ \int_{\Omega} \nabla u \nabla v dx 
+ \int_{\partial \Omega} \beta u v ds
= \frac{\lambda }{\left(\int_{\Omega} f(u)\, dx\right)^{2}} 
\int_{\Omega} f(u) v dx 
\end{equation}
for all $v \in H^{1}(\Omega)$.
\end{definition}

\begin{proposition}
\label{prop4}
Let $u, v \in \mathbb{C^{\infty}}(\overline{Q_{T}})$. Then, 
\begin{equation*}
\begin{split}
\int_{\Omega}&\int_{0}^{T} \left (  ^{ab}_{0}D_{t}^{\alpha}u 
- \triangle u \right ) v dx dt\\
&=  \int_{0}^{T}\int_{\partial \Omega}  
u \frac{\partial v}{\partial \nu}  ds dt
- \int_{0}^{T}\int_{\partial \Omega}  v  
\frac{\partial u}{\partial \nu} ds dt \\
&\quad - \frac{B(\alpha)}{1-\alpha} \int_{\Omega} 
\int_{0}^{T} u(x, 0) E_{\alpha, \alpha}[-\gamma t^{\alpha}] v dx dt\\
&\quad + \int_{0}^{T}\int_{\Omega} u \left( 
- ^{ab}_{T}D_{t}^{\alpha} v - \triangle v \right) dx dt\\
&\quad +\frac{B(\alpha)}{1-\alpha} \int_{\Omega} v(x, T) 
\int_{0}^{T}  u E_{\alpha, \alpha}[-\gamma (T-t)^{\alpha}] dt dx.
\end{split}
\end{equation*}
\end{proposition}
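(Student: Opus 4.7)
The plan is to split the left-hand side into its spatial piece $-\Delta u$ and its temporal piece ${}^{ab}_{0}D_{t}^{\alpha}u$, handle each by a suitable integration-by-parts, and collect the resulting boundary contributions. For each fixed $t \in (0,T)$, Green's second identity on $\Omega$ yields
\[
\int_\Omega (-\Delta u)\,v\,dx = -\int_\Omega u\,\Delta v\,dx + \int_{\partial\Omega} u\,\frac{\partial v}{\partial\nu}\,ds - \int_{\partial\Omega} v\,\frac{\partial u}{\partial\nu}\,ds,
\]
and integrating this identity in $t$ over $(0,T)$ immediately produces the first two boundary integrals of the claim together with the $-\Delta v$ contribution in its fourth line.

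For the temporal part, write $K_\alpha(s) := \tfrac{B(\alpha)}{1-\alpha}\,E_{\alpha,\alpha}[-\gamma s^\alpha]$. Substituting the definition \eqref{eq3} and applying Fubini recast the temporal contribution as
\[
\int_\Omega\int_0^T u_\tau(x,\tau)\,F(x,\tau)\,d\tau\,dx, \qquad F(x,\tau) := \int_\tau^T K_\alpha(t-\tau)\,v(x,t)\,dt.
\]
Integration by parts in $\tau$ produces the boundary contribution $[uF]_{\tau=0}^{\tau=T} = -u(x,0)\,F(x,0)$, since $F(\cdot,T)=0$, and this is precisely the initial-data line $-\tfrac{B(\alpha)}{1-\alpha}\int_\Omega\int_0^T u(x,0)\,E_{\alpha,\alpha}[-\gamma t^\alpha]\,v\,dt\,dx$ of the statement. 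What remains is the volume term $-\int_\Omega\int_0^T u\,\partial_\tau F\,d\tau\,dx$, and by the Leibniz rule (after the substitution $s = t - \tau$)
\[
\partial_\tau F(x,\tau) = -K_\alpha(T-\tau)\,v(x,T) + \int_\tau^T K_\alpha(t-\tau)\,v_t(x,t)\,dt.
\]
The first summand, integrated against $-u$, gives exactly the terminal line $\tfrac{B(\alpha)}{1-\alpha}\int_\Omega v(x,T)\int_0^T u\,E_{\alpha,\alpha}[-\gamma(T-t)^\alpha]\,dt\,dx$. For the second summand, one more application of Fubini turns the convolution paired against $v_t$ into the pairing $\int_\Omega\int_0^T u\,(-{}^{ab}_{T}D_{t}^{\alpha}v)\,dt\,dx$, with Lemma~\ref{lem:} supplying the identification of the reflected convolution with the backward-in-time ABC derivative.

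The most delicate point is this last identification: one must track the time-reflection $t \mapsto T-t$ and the accompanying sign carefully so that the kernel $K_\alpha$ reassembles into ${}^{ab}_{T}D_{t}^{\alpha}v$ on the nose. Lemma~\ref{lem:} is precisely the tool that makes this manipulation rigorous, and once invoked all of the pieces line up to yield the announced identity.
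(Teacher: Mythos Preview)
Your proof is correct and follows the same strategy as the paper: Green's second identity handles the Laplacian (the paper's \eqref{eq9}), while a fractional integration-by-parts formula handles the ABC time derivative (the paper's \eqref{eq8}). The only difference is that the paper obtains \eqref{eq8} by citing \cite{djida}, whereas you derive it from scratch via Fubini, classical integration by parts in $\tau$, and the Leibniz rule, invoking Lemma~\ref{lem:} for the identification with ${}^{ab}_{T}D_{t}^{\alpha}v$; this makes your argument more self-contained but otherwise identical in structure.
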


\begin{proof}
From integration by parts involving the ABC fractional-time derivative 
(see \cite{djida}), a straightforward calculation gives that
\begin{multline}
\label{eq8}
\int_{0}^{T} \, ^{ab}_{0}D_{t}^{\alpha}u \cdot v dt 
=- \int_{0}^{T} \, ^{ab}_{T}D_{t}^{\alpha}v \cdot u dt
+ \frac{B(\alpha)}{1-\alpha} v(x, T)  
\int_{0}^{T} u E_{\alpha, \alpha}[-\gamma (T-t)^{\alpha}] dt\\
- \frac{B(\alpha)}{1-\alpha} u(x, 0) 
\int_{0}^{T} E_{\alpha}[-\gamma t^{\alpha}] v dt
\end{multline}
and
\begin{multline}
\label{eq9}
-\int_{\Omega}\int_{0}^{T} \triangle u \cdot v dx dt 
= \int_{0}^{T}\int_{\partial \Omega} u \frac{\partial v}{\partial \nu}  ds dt
- \int_{0}^{T}\int_{\partial \Omega} v \frac{\partial u}{\partial \nu} ds dt\\
- \int_{\Omega}\int_{0}^{T} \triangle v \cdot u dx dt.
\end{multline}
Combining \eqref{eq8} and \eqref{eq9}, we get the desired result.
\end{proof}

Using the boundary conditions of problem \eqref{eq1}, 
we immediately get the following corollary.

\begin{corollary}
\label{cor5}
Let $u, v \in \mathbb{C^{\infty}}(\overline{Q_{T}})$. Then,
\begin{equation*}
\begin{split}
\int_{\Omega}&\int_{0}^{T} 
\left (  ^{ab}_{0}D_{t}^{\alpha}u  - \triangle u \right ) v dx dt
= \int_{0}^{T}\int_{\partial \Omega}  \beta u v ds dt
+ \int_{0}^{T}\int_{\partial \Omega} u \frac{\partial v}{\partial \nu}  ds dt \\
&\quad - \frac{B(\alpha)}{1-\alpha} \int_{\Omega} \int_{0}^{T} u(x, 0) 
E_{\alpha, \alpha}[-\gamma t^{\alpha}] + \int_{0}^{T}\int_{\Omega} 
u \left ( - ^{ab}_{T}D_{t}^{\alpha} v - \triangle v   \right ) dx dt\\
&\quad +\frac{B(\alpha)}{1-\alpha} \int_{\Omega} v(x, T) 
\int_{0}^{T}  \psi E_{\alpha, \alpha}[-\gamma (T-t)^{\alpha}] dt dx.
\end{split}
\end{equation*}
\end{corollary}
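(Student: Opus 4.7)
The plan is to derive this corollary directly from Proposition~\ref{prop4} by specializing the two boundary integrals in view of the Robin condition $\frac{\partial u}{\partial \nu}=-\beta u$ on $S_T$ prescribed by problem \eqref{eq1}. In Proposition~\ref{prop4}, the surface terms on $\partial\Omega$ appear as
$$
\int_{0}^{T}\!\!\int_{\partial\Omega} u\,\frac{\partial v}{\partial\nu}\,ds\,dt - \int_{0}^{T}\!\!\int_{\partial\Omega} v\,\frac{\partial u}{\partial\nu}\,ds\,dt .
$$
Substituting the boundary condition into the second of these integrals replaces $-\frac{\partial u}{\partial\nu}$ by $\beta u$, and thus turns that contribution into $\int_{0}^{T}\!\int_{\partial\Omega}\beta u v\,ds\,dt$, which matches the leading right-hand side term of the corollary. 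The first boundary integral, involving $\frac{\partial v}{\partial\nu}$, is kept as-is, since $v$ is only assumed smooth (so that it can later play the role of an adjoint state whose boundary behavior is at our disposal). The volume and initial/terminal-time terms are simply copied from Proposition~\ref{prop4}.

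No real obstacle is anticipated: the argument is a one-line substitution in the identity of Proposition~\ref{prop4}, so the only work is notational bookkeeping to ensure that the Mittag--Leffler and ABC-derivative terms are transcribed without error, and that the base points $0$ and $T$ on the two ABC operators are kept straight (as clarified by the preceding lemma on time-reversal of $^{ab}D^{\alpha}$).
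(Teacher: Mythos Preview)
Your proposal is correct and mirrors the paper's own approach: the corollary is obtained immediately from Proposition~\ref{prop4} by inserting the Robin boundary condition $\frac{\partial u}{\partial \nu}=-\beta u$ of \eqref{eq1} into the term $-\int_{0}^{T}\int_{\partial\Omega} v\,\frac{\partial u}{\partial\nu}\,ds\,dt$, leaving all remaining terms unchanged.
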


Along the text, constants $c$ are generic, and may change at each occurrence.

% --------------------------------------------------

\section{Existence and uniqueness for \eqref{equa1}} 
\label{sec3}

We proceed similarly as in \cite{djida}. Let $V_{m}$ define a subspace 
of $H^{1}(\Omega)$ generated by $w_{1}$, $w_{2}$, $\ldots$, $w_{m}$, 
space vectors of orthogonal eigenfunctions of the operator $\Delta$. 
We seek $u_{m}: t \in (0,  T] \rightarrow u_{m}(t) \in V_{m}$, 
solution of the fractional differential equation
\begin{equation*}
\begin{cases}
\displaystyle \int_{\Omega} \, ^{ab}_{0}D_{t}^{\alpha} u_{m} v dx 
+ \int_{\Omega} \nabla u_{m} \nabla v dx 
+ \int_{\partial \Omega} \beta u_{m} v ds  
= (g(u_{m}), v) & \mbox{ for all } v \in V_{m}, \\
u_{m}(x, 0)=u_{0m} & \mbox{ for } x \in \Omega,
\end{cases}
\end{equation*}
with $g(u)= \frac{\lambda f(u)}{\left(\int_{\Omega} f(u)\, dx\right)^{2}}$.

\begin{theorem}
\label{thm6}
Let $\alpha \in (0, 1)$. Assume that $f \in L^{2}(Q_{T})$, 
$u_{0} \in L^{2}(\Omega)$. Let $(\cdot, \cdot)$ be the 
scalar product in $L^{2}(\Omega)$ and $a(\cdot, \cdot)$ 
be the bilinear form in $H^{1}_{0}(\Omega)$ defined by
$$
a(\phi, \psi)= \int_{\Omega} \nabla \phi(x) \nabla \psi(x) dx 
\quad \forall \phi, \psi \in H^{1}(\Omega).
$$
Then the problem
\begin{equation*}
\begin{cases}
\left(^{ab}_{0}D_{t}^{\alpha}u, v\right) + a(u(t), v)
= (f(t), v), & \mbox{ for all } t \in (0, T), \\
u(x, 0)=u_{0}, & \mbox{ for } x \in \Omega,
\end{cases}
\end{equation*}
has a unique solution $u \in L^{2}(0, T, H_{0}^{1}(\Omega)) 
\bigcap \mathbb{C}(0, T, H_{0}^{1}(\Omega))$ given by
\begin{multline}
\label{eq13}
u(x,t)=  
\sum_{i=1}^{+\infty}  \left( \zeta_{i} E_{\alpha}[-\gamma_{i} 
t^{\alpha}]u^{0}_{i}+\frac{(1-\alpha)\zeta_{i}}{B(\alpha)} f_{i}(t) \right.\\
\left.+  K_{i}\int_{0}^{t}(t-s)^{\alpha-1} E_{\alpha, \alpha}\left[
-\gamma_{i}(t-s)^{\alpha}\right] f_{i}(s) ds  \right) w_{i},
\end{multline}
where $\gamma_{i}$ and $\zeta_{i}$ are constants. 
Moreover, provided $u_{0} \in L^{2}(\Omega)$,
$u$ satisfies the inequalities
\begin{equation}
\label{eq14}
\|u\|_{ L^{2}(0, T, H_{0}^{1}(\Omega))} 
\leq \mu_{1} ( \|u_{0}\|_{H_{0}^{1}(\Omega)} +\|f\|_{L^{2}(Q_{T})})
\end{equation}
and
\begin{equation}
\label{eq15}
\|u\|_{ L^{2}(\Omega))} \leq \mu_{2} ( \|u_{0}\|_{L^{2}(\Omega)} +\|f\|_{L^{2}(Q_{T})}),
\end{equation}
where $\mu_{1}$ and $\mu_{2}$ are positive constants.
\end{theorem}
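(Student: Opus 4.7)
My strategy would be a Faedo--Galerkin argument combined with an explicit Mittag--Leffler representation for the modal coefficients. On the finite-dimensional subspace $V_m$ already introduced, I write $u_m(x,t) = \sum_{i=1}^{m} c_i^m(t)\, w_i(x)$ and test the approximate variational equation with $v = w_j$. Since the $w_j$ are $L^2$-orthonormal eigenfunctions of $-\triangle$ with eigenvalues $\kappa_j$, the system decouples into scalar fractional ODEs
\begin{equation*}
{}^{ab}_{0}D_t^{\alpha} c_j^m(t) + \kappa_j\, c_j^m(t) = f_j(t), \qquad c_j^m(0) = u_{0,j},
\end{equation*}
with $f_j(t) = (f(t), w_j)$ and $u_{0,j} = (u_0, w_j)$. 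Applying the Laplace transform to the ABC derivative---or, equivalently, inverting the operator ${}^{ab}_0 D_t^\alpha + \kappa_j$ via its resolvent---yields a closed-form solution consisting of a Mittag--Leffler evolution $E_{\alpha}[-\gamma_j t^\alpha]$ acting on the initial datum plus a convolution with $(t-s)^{\alpha-1} E_{\alpha,\alpha}[-\gamma_j(t-s)^\alpha]$ acting on $f_j$, where $\gamma_j$ and $\zeta_j$ emerge as explicit algebraic combinations of $B(\alpha)$, $\alpha$ and $\kappa_j$. Summing over $j$ reproduces precisely the series \eqref{eq13}.

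The second stage is the derivation of a priori bounds. Testing the Galerkin equation with $v = u_m$ makes $a(u_m, u_m) = \|\nabla u_m\|_{L^2}^2$ appear, and the decisive analytic input is the coercivity inequality
\begin{equation*}
\bigl({}^{ab}_{0}D_t^{\alpha} u_m(\cdot, t),\, u_m(\cdot, t)\bigr) \ \geq \ \tfrac{1}{2}\, {}^{ab}_{0}D_t^{\alpha} \|u_m(\cdot, t)\|_{L^2(\Omega)}^2,
\end{equation*}
which is the ABC analogue of the classical Alikhanov--Caputo bound and can be read off \eqref{eq3} by writing the derivative as the Mittag--Leffler kernel acting on $\partial_t u_m$. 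After absorbing the boundary contribution $\int_{\partial\Omega} \beta u_m^2\, ds$ through the trace theorem and the upper bound $\beta \le M$, a fractional Gr\"onwall lemma with Mittag--Leffler kernel yields uniform bounds on $\{u_m\}$ in $L^\infty(0,T;L^2(\Omega)) \cap L^2(0,T;H^1_0(\Omega))$, which pass to the limit by weak lower semicontinuity to give \eqref{eq14} and \eqref{eq15}.

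It then remains to pass to the limit and to conclude uniqueness and continuity. Extracting a subsequence converging weakly in $L^2(0,T;H^1_0)$ and weakly-$*$ in $L^\infty(0,T;L^2)$, linearity of the problem together with the continuity of ${}^{ab}_0 D_t^\alpha$ on these spaces---transparent from the integral representation \eqref{eq3}---allows identification of the limit in the variational identity; by uniqueness of the Galerkin coefficients the limit must coincide with the series \eqref{eq13}. Continuity $u \in C(0,T;H^1_0(\Omega))$ follows from the uniform-in-$t$ convergence of the Mittag--Leffler series together with the $H^1_0$ regularity of each $w_i$. Uniqueness is obtained by subtracting two candidate solutions and applying the same ABC coercivity inequality to the homogeneous difference equation, concluding via Gr\"onwall that the difference vanishes identically. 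The one genuinely delicate point, and the step I expect to consume most of the technical effort, is the ABC coercivity/Gr\"onwall pair: unlike for Caputo derivatives, the non-singular Mittag--Leffler kernel obstructs the standard pointwise energy identity, and one must either build on the fractional integration-by-parts of \cite{djida} or argue directly from \eqref{eq13} by term-by-term estimation of the Mittag--Leffler expansions.
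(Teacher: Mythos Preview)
Your overall architecture---Faedo--Galerkin on the eigenbasis, explicit Mittag--Leffler solution of the decoupled scalar ODEs yielding the series \eqref{eq13}, a priori bounds, and passage to the limit---coincides with the paper's. The paper's own proof is extremely terse: it writes down the Galerkin ansatz, observes that the explicit form \eqref{eq13} holds, and then defers the entire derivation of the Cauchy property and the estimates \eqref{eq14}--\eqref{eq15} to \cite{djida}, where the bounds are obtained by direct term-by-term control of the Mittag--Leffler series (using the known decay and boundedness of $E_{\alpha}$ and $E_{\alpha,\alpha}$) rather than by an energy argument. Your proposed route via an ABC-Alikhanov coercivity inequality plus a fractional Gr\"onwall lemma is genuinely different at this step; you yourself flag it as the delicate point, and indeed the cleaner and more standard path---and the one actually taken in \cite{djida}---is exactly the alternative you mention at the end: estimate the series \eqref{eq13} directly, mode by mode, using $|E_{\alpha}(-z)|\le C/(1+z)$ and analogous bounds. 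That avoids having to justify the Alikhanov-type inequality for the nonsingular Mittag--Leffler kernel altogether.

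One small inaccuracy: the linear problem of Theorem~\ref{thm6} is posed in $H^1_0(\Omega)$ with no Robin term, so there is no boundary contribution $\int_{\partial\Omega}\beta u_m^2\,ds$ to absorb at this stage; that term belongs to the nonlinear problem \eqref{eq1}, which the paper treats (somewhat confusingly, inside the same proof block) by applying Theorem~\ref{thm6} with the nonlocal right-hand side $g(u_m)$ in place of $f$.
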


\begin{proof}
Because $u_{m}(t) \in V_{m}$, one has
$$
u_{m}(t)=\sum_{i=1}^{m} (u(t), w_{i}) w_{i}
= \sum_{i=1}^{m} u_{i}(t) w_{i}.
$$
The fact that $g(u) \in L^{2}(Q_{T})$ implies that $u_{m}$ can be 
written in explicit form (see \eqref{eq13}). Arguing exactly as in \cite{djida}, 
we can prove that $u_{m}(t)$ is a Cauchy sequence in the space $L^{2}(0, T, H^{1}(\Omega))$ 
and $\mathbb{C}(0, T, L^{2}(\Omega)$. Using the estimates 
\eqref{eq14}--\eqref{eq15} of Theorem~\ref{thm6}, we have that
\begin{equation*}
\begin{gathered}
u_{m} \rightarrow u \mbox{ weakly in } L^{2}(0, T,
H^{1}(\Omega)),\\
\frac{\partial u_{m}}{\partial t} \rightarrow \frac{\partial
u_{m}}{\partial t}  \mbox{ weakly in } L^{2}(0, T,
H^{-1}(\Omega)),\\
u_{m} \rightarrow u \mbox{ weakly in } \mathbb{C}(0, T,
L^{2}(\Omega)),\\
u_{m} \rightarrow u \mbox{ strongly in } L^{2}(Q_{T}),\\
u_{m} \rightarrow u \mbox{ a.e.  in }L^{2}(Q_{T}).
\end{gathered}
\end{equation*}
By standard techniques of Lebesgue's theorem 
and some compactness arguments of Lions \cite{jll}, 
one gets that $u(t)$ is a solution of problem \eqref{eq1}. 
Then, the existence and uniqueness result follows.
\end{proof}

% ----------------------------------------------

\section{Existence of an optimal control}
\label{sec4}

We prove existence of an optimal control by using
minimizing sequences.

\begin{theorem}
\label{thm:3.1} 
Assume that assumptions (H1)--(H3) are satisfied.
Then, there exists at least an optimal solution 
$\beta \in L^{\infty}(Q_{T})$
such that \eqref{P} holds true.
\end{theorem}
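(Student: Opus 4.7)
The plan is to apply the direct method of the calculus of variations. I would start by choosing a minimizing sequence $\{\beta_n\} \subset U_M$ with $J(\beta_n) \to \inf_{\beta \in U_M} J(\beta)$. Because the admissible set is uniformly bounded in $L^{\infty}$, the Banach--Alaoglu theorem yields a subsequence (still indexed by $n$) and some $\overline{\beta}$ such that $\beta_n \stackrel{*}{\rightharpoonup} \overline{\beta}$ in $L^{\infty}(Q_T)$. The pointwise bounds $m \leq \beta_n \leq M$ are preserved under weak-$*$ limits (convexity and closedness of the order cone), so $\overline{\beta} \in U_M$.

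Next, let $u_n := u(\beta_n)$ be the state associated with $\beta_n$ via Theorem~\ref{thm6}. The a~priori estimates \eqref{eq14}--\eqref{eq15} give uniform bounds for $\{u_n\}$ in $L^2(0,T;H^1(\Omega)) \cap \mathbb{C}(0,T;L^2(\Omega))$. Coupling these with a bound on $^{ab}_0D_t^{\alpha} u_n$ in a suitable negative space (obtainable from the equation once the right-hand side is controlled by (H2)), a fractional Aubin--Lions type argument (in the spirit of \cite{djida}) produces a further subsequence with $u_n \rightharpoonup \overline{u}$ in $L^2(0,T;H^1(\Omega))$, $u_n \to \overline{u}$ strongly in $L^2(Q_T)$, a.e.\ in $Q_T$, and, by continuity of the trace operator combined with compactness, $u_n|_{\partial\Omega} \to \overline{u}|_{\partial\Omega}$ strongly in $L^2(S_T)$.

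The decisive step is passing to the limit in the weak formulation \eqref{equa1} to identify $\overline{u} = u(\overline{\beta})$. The diffusion term passes by weak convergence of the gradients. For the Robin boundary integral $\int_{S_T} \beta_n u_n v\, ds\, dt$, I would combine the weak-$*$ convergence of $\beta_n$ with the strong trace convergence of $u_n$ to obtain the product $\int_{S_T} \overline{\beta}\,\overline{u}\, v\, ds\, dt$. For the nonlocal right-hand side, hypothesis (H2) guarantees $c_1|\Omega| \leq \int_\Omega f(u_n)\, dx \leq c_2|\Omega|$, so the denominator stays bounded away from zero; meanwhile the Lipschitz continuity of $f$ together with a.e.\ and dominated convergence yields $f(u_n) \to f(\overline{u})$ strongly in $L^2(Q_T)$, and the whole quotient converges. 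Finally, lower semicontinuity of $J$ is straightforward: the linear term $\int_{Q_T} u_n$ converges by strong $L^2$ convergence, while $\beta \mapsto \int_{S_T} \beta^2$ is convex and strongly continuous, hence weakly-$*$ lower semicontinuous, so
\begin{equation*}
J(\overline{\beta}) \leq \liminf_{n\to\infty} J(\beta_n) = \inf_{\beta \in U_M} J(\beta),
\end{equation*}
which forces equality.

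I expect the main obstacle to be the compactness step for the states. The classical Aubin--Lions lemma does not transfer verbatim to the ABC setting because the kernel is nonsingular and the usual $H^1$-in-time control is replaced by the fractional analogue; one must lean on the spectral representation \eqref{eq13}, the monotonicity properties of the Mittag--Leffler function $E_{\alpha,\alpha}$, and the fractional integration-by-parts identity \eqref{eq8} to obtain the strong $L^2(Q_T)$ convergence needed to handle both the nonlocal quotient and the boundary product simultaneously.
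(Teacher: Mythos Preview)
Your proposal is correct and follows essentially the same route as the paper: a minimizing sequence in $U_M$, uniform a~priori bounds from Theorem~\ref{thm6}, extraction of weakly/strongly convergent subsequences, passage to the limit in each term of the weak formulation (the boundary integral via the splitting $\beta_n u_n-\overline{\beta}\,\overline{u}=\beta_n(u_n-\overline{u})+(\beta_n-\overline{\beta})\overline{u}$, the nonlocal term via (H2) and continuity of $f$), and weak lower semicontinuity of $J$. The only cosmetic difference is that the paper leans on Corollary~\ref{cor5} and test functions in $\mathbb{D}(Q_T)$ to identify the limit equation, whereas you work directly with the variational identity \eqref{equa1}; the paper also records uniqueness of the minimizer from strict convexity of $J$, which you could add as a final line.
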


\begin{proof}
Let $(\beta_{n})_{n}$ be a minimizing sequence of
$J(\beta)$ in $ U_{M}$ such that
$$
\lim_{n \rightarrow +\infty} J(\beta_{n})
= \inf_{\beta \in U_{M}} J(\beta).
$$
Then, $u_{n}=u_{n}(x, t, \beta_{n})$, 
the corresponding solutions to \eqref{eq1}, satisfy
\begin{equation*}
\begin{split}
^{ab}_{0}D_{t}^{\alpha}u_{n}- \triangle u_{n} 
&= \frac{\lambda f(u_{n})}{(
\int_{\Omega} f(u_{n})\, dx)^{2}}\, ,  
\mbox{ in } Q_{T}= \Omega \times (0, T), \\
\frac{\partial u_{n}}{\partial\nu} 
&= -\beta_{n} u_{n} \, ,  \mbox{ on } S_{T}
=\partial \Omega \times (0, T), \\
u_{n}(0, x)&= u_{0}(x) \, ,  \mbox{ in } \Omega.
\end{split}
\end{equation*}
By Theorem~\ref{thm6}, we have that $(u_{n})$ is bounded, 
independently of $n$ in $L^{2}(0, T, H^{1}(\Omega))$.
Moreover, for a positive constant independent of $n$, we have
\begin{equation*}
\| ^{ab}_{0}D_{t}^{\alpha}u_{n}- \triangle u_{n} \|_{L^{2}(Q_{T})} \leq c.
\end{equation*}
Therefore, there exists $u$, for extracted sequences of $(u_{n})_{n}$, 
still denoted by $(u_{n})$, and there exists
$\beta \in U_{M}$ such that
\begin{equation}
\label{eq19}
\begin{gathered}
^{ab}_{0}D_{t}^{\alpha}u_{n}- \triangle u_{n} 
\rightharpoonup \delta \mbox{ weakly in } L^{2}(Q_{T}),\\
u_{n} \rightarrow u \mbox{ weakly in } L^{2}(0, T,
H^{1}(\Omega)),\\
\frac{\partial u_{n}}{\partial t} \rightarrow \frac{\partial
u}{\partial t}  \mbox{ weakly in } \mathbb{D}'(Q_{T}) 
\mbox{ and } L^{2}(0, T, H^{-1}(\Omega)),\\
u_{n} \rightarrow u \mbox{ weakly in } L^{\infty}(0, T,
L^{2}(\Omega)) \mbox{ and in }  L^{2}(Q_{T}),\\
u_{n} \rightarrow u \mbox{ strongly in } L^{2}(Q_{T}),\\
u_{n} \rightarrow u \mbox{ a.e.  in }L^{2}(Q_{T}),\\
\beta_{n} \rightarrow \beta \mbox{ weakly in } L^{2}(\partial \Omega),\\
\beta_{n} \rightarrow \beta \mbox{ weakly star in } L^{\infty}(\partial \Omega),
\end{gathered}
\end{equation}
where $\mathbb{D}'(Q_{T})$ is the dual of $\mathbb{D}(Q_{T})$, 
the set of $\mathbb{C}^{\infty}$ functions on $Q_{T}$ with compact support.
One can prove that
$$
^{ab}_{0}D_{t}^{\alpha}u_{n}- \triangle u_{n} 
\rightarrow ^{ab}_{0}D_{t}^{\alpha}u 
- \triangle u \mbox{ weakly in } \mathbb{D}'(Q_{T}).
$$
Indeed, we have
$$
\int_{0}^{T} \int_{\Omega} u_{n}( ^{ab}_{0}D_{t}^{\alpha}v 
- \triangle v ) dx dt
\rightarrow
\int_{0}^{T} \int_{\Omega} u( -^{ab}_{T}D_{t}^{\alpha}v -
 \triangle v ) dx dt, \forall v \in \mathbb{D}(Q_{T})
$$
and
$$
\int_{\Omega} v(x, T) \int_{0}^{T} u_{n} 
E_{\alpha, \alpha}[-\gamma (T-t)^{\alpha}] dt dx
\rightarrow
\int_{\Omega} v(x, T) \int_{0}^{T} u E_{\alpha, \alpha}[
-\gamma (T-t)^{\alpha}] dt dx.
$$
We now prove that for all $v \in H^{1}(\Omega)$ and
$n \rightarrow \infty$ one has
$$
\int_{\partial \Omega} \beta_{n} u_{n} v ds  \rightarrow
\int_{\partial \Omega} \beta u v ds.
$$
In fact, 
\begin{equation}
\label{eq20}
\beta_{n} u_{n} v  - \beta u v  
= \beta_{n}( u_{n}  -  u )v +  (\beta_{n}- \beta) uv.
\end{equation}
By using that $\beta_{n}$ is essentially bounded, 
Schwartz's inequality and the trace inequality 
$\|u\|_{L^{2}(\partial \Omega)} \leq c \|u\|_{H^{1}(\Omega)}$, 
it leads from limits \eqref{eq19} that the right-hand
side of \eqref{eq20} goes to $0$ when $n \rightarrow \infty$.
Thus,
$$
^{ab}_{0}D_{t}^{\alpha}u_{n}- \triangle u_{n} 
\rightarrow ^{ab}_{0}D_{t}^{\alpha}u 
- \triangle u \mbox{ weakly in } \mathbb{D}'(Q_{T}).
$$
From the uniqueness of the limit, we have
$$
^{ab}_{0}D_{t}^{\alpha}u - \triangle u = \delta.
$$
Since $u \in L^{2}(Q_{T})$ and $ ^{ab}_{0}D_{t}^{\alpha}u - \triangle u 
\in L^{2}(Q_{T})$, we know that $u/\partial \Omega$
and $\frac{\partial u}{\partial \nu}/\partial \Omega$ exist 
and belong to $H^{-\frac{1}{2}}(\partial \Omega)$ and  
$H^{-\frac{3}{2}}(\partial \Omega)$, respectively. It follows that
$$
\int_{\partial \Omega} u_{n} \frac{\partial v}{\partial \nu} 
\rightarrow \int_{\partial \Omega} u \frac{\partial v}{\partial \nu}
\quad \forall v \in \mathbb{D}(Q_{T}).
$$
On the other hand, we have $u_{n} \rightarrow u$ a.e. in
$\Omega \times (0, T)$. Since $f$ is continuous,
$f(u_{n}) \rightarrow f(u) \, a.e. \mbox{ in } L^{2}(\Omega)$.
It yields that
\begin{equation*}
\int_{\Omega} f(u_{n}) dx \rightarrow \int_{\Omega} f(u)
dx
\end{equation*}
and
\begin{equation*}
\int_{\Omega} f(u_{n})v dx \rightarrow \int_{\Omega} f(u) v dx, \,
\, \forall v \in H^{1}(\Omega).
\end{equation*}
By passing to the limit in the equation fulfilled by $u_{n}$, 
and using Corollary~\ref{cor5}, we deduce that $u$ is a solution 
of \eqref{eq1}. Finally, function $\beta \rightarrow J(\beta)$ 
is lower semi-continuous. Therefore,
$$
J(\beta) \leq \liminf_{n\rightarrow \infty} J(\beta_{n}),
$$
which implies that $J(\beta) = \inf_{\beta \in U_{M}} J(\beta)$. 
The uniqueness of $\beta$ comes from the strict convexity of functional $J$.
\end{proof}

% ----------------------------------------------

\section{Optimality conditions}
\label{sec:4}

In this section, our aim is to obtain optimality conditions.
As we shall see, our necessary optimality conditions involve 
an adjoint system defined by means of the backward Atangana--Baleanu
fractional-time derivative. To prove them, we assume, 
in addition to hypotheses (H1)--(H3), that
\begin{itemize}
\item[(H4)] $f$ is of class $C^{1}$.
\end{itemize}

Due to its dependence on $u$, the objective functional is differentiated with
respect to the minimizing control. We calculate the G\^{a}teaux derivative 
of $J$ with respect to the control $\beta$ in the direction $l$ at $\beta$. 
We also need to differentiate $u$ with respect to the control $\beta$. 
The difference quotient $\left(u(\beta + \varepsilon l)-u(\beta)\right)/\varepsilon$
is expected to converge weakly in $H^{1}(\Omega)$ to a function $\psi$ satisfying a
linear PDE, which leads to the adjoint system.

\begin{theorem}
\label{thm31}
Assume hypotheses (H1)--(H4). Then $\beta \mapsto u(\beta)$ is
differentiable in the sense that as $\varepsilon \rightarrow 0$
one has
$$
\frac{u(\beta + \varepsilon l)-u(\beta)}{\varepsilon}\rightarrow
\psi \mbox{ weakly in } H^{1}(\Omega)
$$
for any $\beta, l \in U_{M}$ such that 
$(\beta + \varepsilon l) \in U_{M}$ for small $\varepsilon$. 
Moreover, $\psi$ fulfills the following system:
\begin{equation}
\label{sys}
\begin{gathered}
^{ab}_{0}D_{t}^{\alpha} \psi - \triangle \psi 
= \frac{-2\lambda f(u)}{( \int_{\Omega} f(u)\, dx )^{3}}
\int_{\Omega}f'(u) \psi dx+ \frac{\lambda f'(u) \psi }{\left(
\int_{\Omega} f(u)\, dx\right)^{2}}
\quad \mbox{ in } \Omega, \\
\frac{\partial \psi}{\partial \nu}+ \beta \psi + l u = 0 
\mbox{ on } \partial \Omega.
\end{gathered}
\end{equation}
\end{theorem}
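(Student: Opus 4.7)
The plan is to introduce the difference quotient $\psi^{\varepsilon}=\big(u(\beta+\varepsilon l)-u(\beta)\big)/\varepsilon$, derive the PDE it satisfies by subtracting the two state equations, establish a uniform (in $\varepsilon$) \emph{a priori} bound via the estimates of Theorem~\ref{thm6}, extract a weak limit $\psi$, and identify that limit as the (unique) solution of \eqref{sys}. Write $u^{\varepsilon}=u(\beta+\varepsilon l)$ and $u=u(\beta)$. Since $^{ab}_{0}D_{t}^{\alpha}$ and $\triangle$ are linear, subtracting the equations for $u^{\varepsilon}$ and $u$ and dividing by $\varepsilon$ gives
\begin{equation*}
^{ab}_{0}D_{t}^{\alpha}\psi^{\varepsilon}-\triangle\psi^{\varepsilon}
=\frac{1}{\varepsilon}\left[\frac{\lambda f(u^{\varepsilon})}{\bigl(\int_{\Omega}f(u^{\varepsilon})dx\bigr)^{2}}-\frac{\lambda f(u)}{\bigl(\int_{\Omega}f(u)dx\bigr)^{2}}\right],
\end{equation*}
together with the Robin boundary condition $\partial\psi^{\varepsilon}/\partial\nu=-\beta\psi^{\varepsilon}-l\,u^{\varepsilon}$ and the initial datum $\psi^{\varepsilon}(0)=0$. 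The boundary condition follows from computing $\partial u^{\varepsilon}/\partial\nu-\partial u/\partial\nu=-(\beta+\varepsilon l)u^{\varepsilon}+\beta u$ and dividing by $\varepsilon$.

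Next, I would algebraically decompose the right-hand side as
\begin{equation*}
\frac{f(u^{\varepsilon})-f(u)}{\varepsilon\bigl(\int_{\Omega}f(u^{\varepsilon})dx\bigr)^{2}}
+\frac{f(u)}{\varepsilon}\left[\frac{1}{\bigl(\int_{\Omega}f(u^{\varepsilon})dx\bigr)^{2}}-\frac{1}{\bigl(\int_{\Omega}f(u)dx\bigr)^{2}}\right],
\end{equation*}
and use (H1)--(H2) together with the Lipschitz bound on $f$ to control each factor. Hypothesis (H2) keeps the denominators bounded away from zero, and the Lipschitz property of $f$ ensures that $|f(u^{\varepsilon})-f(u)|/\varepsilon$ is dominated by $|\psi^{\varepsilon}|$ up to a constant, so the full right-hand side is controlled by $C\|\psi^{\varepsilon}\|_{L^{2}(\Omega)}$. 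Applying the energy estimates of Theorem~\ref{thm6} to the linear ABC equation for $\psi^{\varepsilon}$ (treating its right-hand side and the boundary forcing $l\,u^{\varepsilon}$, which is bounded uniformly in $\varepsilon$ by Theorem~\ref{thm6} applied to $u^{\varepsilon}$), a Gr\"onwall-type absorption yields a uniform bound
\begin{equation*}
\|\psi^{\varepsilon}\|_{L^{2}(0,T;H^{1}(\Omega))}+\|\psi^{\varepsilon}\|_{\mathbb{C}(0,T;L^{2}(\Omega))}\le C,
\end{equation*}
with $C$ independent of $\varepsilon$. Consequently, up to a subsequence, $\psi^{\varepsilon}\rightharpoonup\psi$ weakly in $L^{2}(0,T;H^{1}(\Omega))$ and, by Aubin--Lions compactness, strongly in $L^{2}(Q_{T})$.

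For the passage to the limit, I would combine the strong convergence $u^{\varepsilon}\to u$ in $L^{2}(Q_{T})$ (already established in the proof of Theorem~\ref{thm:3.1}, applied to the perturbed controls) with the continuity of $f'$ granted by (H4) and the dominated convergence theorem. A first-order Taylor expansion $f(u^{\varepsilon})-f(u)=f'(u)(u^{\varepsilon}-u)+o(u^{\varepsilon}-u)$ together with the weak convergence of $\psi^{\varepsilon}$ gives $\varepsilon^{-1}(f(u^{\varepsilon})-f(u))\rightharpoonup f'(u)\psi$ in $L^{2}(\Omega)$, so the first quotient converges to $\lambda f'(u)\psi\bigl/\bigl(\int_{\Omega}f(u)dx\bigr)^{2}$. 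The bracket in the second quotient factors as $\bigl(\int f(u)\bigr)^{-2}\bigl(\int f(u^{\varepsilon})\bigr)^{-2}\bigl[(\int f(u))^{2}-(\int f(u^{\varepsilon}))^{2}\bigr]$; expanding the square difference and dividing by $\varepsilon$ delivers, in the limit, the term $-2\lambda f(u)\bigl(\int f(u)\bigr)^{-3}\int f'(u)\psi dx$. Passing to the limit in the boundary term is immediate since $u^{\varepsilon}\to u$ in $L^{2}(\partial\Omega\times(0,T))$ by the trace theorem.

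The main obstacle will be the second term: proving that the difference $\bigl(\int f(u^{\varepsilon})\bigr)^{-2}-\bigl(\int f(u)\bigr)^{-2}$ divided by $\varepsilon$ truly converges to the expected nonlocal expression, because it requires passing a weak limit inside a spatial integral that itself depends on $\psi^{\varepsilon}$. This is resolved by observing that the integrand $f'(u^{\varepsilon})\psi^{\varepsilon}$ equals the product of a strongly convergent sequence (via continuity of $f'$ and $u^{\varepsilon}\to u$ a.e., combined with a uniform bound through (H2) and continuity) and a weakly convergent one, so the product converges weakly to $f'(u)\psi$ and integration over $\Omega$ preserves this. Once the limit equation is identified, uniqueness of the solution to the linear system \eqref{sys} (again from Theorem~\ref{thm6}) forces the whole family $\psi^{\varepsilon}$, not just a subsequence, to converge, completing the proof.
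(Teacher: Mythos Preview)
Your proposal is correct and follows essentially the same strategy as the paper: form the difference quotient, subtract the state equations, invoke the energy estimates of Theorem~\ref{thm6} for a uniform bound, extract a weak limit, and pass to the limit after splitting the right-hand side into the same two pieces (your two summands are exactly the paper's terms $(I)$ and $(II)$). Your write-up is in fact more careful than the paper's in two places---you make explicit the Gr\"onwall-type absorption needed because the right-hand side depends on $\psi^{\varepsilon}$ itself, and you invoke uniqueness of the linear limit system \eqref{sys} to upgrade subsequence convergence to convergence of the full family---but these are refinements of the same argument, not a different route.
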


\begin{proof}
Denote $u=u(\beta)$ and $u_{\varepsilon}=u(\beta_{\varepsilon})$,
where $\beta_{\varepsilon}= \beta + \varepsilon l$.
Subtracting equation \eqref{eq1} from the corresponding equation
of $u_{\varepsilon}$, we have
\begin{equation*}
^{ab}_{0}D_{t}^{\alpha}\left(\frac{u_{\varepsilon}-u}{\varepsilon}\right) 
- \triangle\left(\frac{u_{\varepsilon}-u}{\varepsilon}\right)
= g(u_{\varepsilon})-g(u)
\end{equation*}
with
$$
g(u_{\varepsilon})-g(u)= \frac{\lambda}{\epsilon} \frac{\left (f(u_{\varepsilon})
-f(u)\right )}{( \int_{\Omega} f(u_{\varepsilon})\, dx)^{2}}
+ \frac{\lambda}{\epsilon} f(u)\left( \frac{1}{\left(\int_{\Omega}
f(u_{\varepsilon})\, dx\right)^{2}}
- \frac{1}{( \int_{\Omega} f(u)\, dx)^{2}}\right).
$$
As in the first section, since $g(u_{\varepsilon})-g(u) 
\in L^{\infty}(\Omega) \subseteq L^{2}(\Omega)$,
by using the energy estimates of Theorem~\ref{thm6}, we get that
$$
\frac{u_{\varepsilon}-u}{\varepsilon} 
\in L^{\infty}(0, T, L^{2}(\Omega)) \bigcap L^{2}(0, T, H^{1}(\Omega)).
$$
It follows, up to a subsequence of $\varepsilon$ which tends to $0$, 
that there exists $\psi$ such that
\begin{equation}
\label{eq23}
\begin{gathered}
\frac{u_{\varepsilon}-u}{\varepsilon} \rightarrow \psi \mbox{
weakly in } L^{\infty}(0, T, L^{2}(\Omega)),\\
\frac{u_{\varepsilon}-u}{\varepsilon} \rightarrow \psi \mbox{
weakly in } L^{2}(0, T, H^{1}(\Omega)),\\
\frac{\partial}{\partial t}\left(\frac{u_{\varepsilon}-u}{\varepsilon}\right)
\rightarrow \frac{\partial \psi}{\partial t} \mbox{ weakly in } L^{2}(0, T, H^{-1}\Omega)),\\
\frac{u_{\varepsilon}-u}{\varepsilon} \rightarrow \psi \mbox{
weakly in } L^{\infty}(0, T, L^{2}(\partial \Omega)),\\
\beta_{\varepsilon} \rightarrow \beta \mbox{ weakly in }
L^{2}(\partial \Omega),\\
\beta_{\varepsilon} \rightarrow \beta \mbox{ weakly in }
L^{\infty}(\Omega).
\end{gathered}
\end{equation}
From equations satisfied by $u_{\varepsilon}$ and $u$, 
and in view of Proposition~\ref{prop4}, we have that 
\begin{multline*}
\int_{\Omega}\int_{0}^{T} \, ^{ab}_{0}D_{t}^{\alpha}\left(
\frac{u_{\varepsilon}-u}{\varepsilon}\right) v dx dt
+ \int_{\Omega}\int_{0}^{T}\nabla\left(\frac{u_{\varepsilon}-u}{\varepsilon}\right) 
\nabla v dx dt \\
+ \int_{0}^{T}\int_{\partial \Omega} \frac{u_{\varepsilon}-u}{\varepsilon} v ds dt
+ \int_{0}^{T}\int_{\partial \Omega} lu_{\varepsilon} v ds dt = g(u_{\varepsilon})-g(u).
\end{multline*}
Using \eqref{eq23} and passing to the limit as $\varepsilon \rightarrow 0$, we get that
\begin{equation*}
\int_{\Omega}\int_{0}^{T} \, ^{ab}_{0}D_{t}^{\alpha} \psi 
+ \int_{\Omega}\int_{0}^{T}\nabla \psi \nabla v dx dt
+ \int_{0}^{T}\int_{\partial \Omega} (\beta \psi +lu ) ds dt 
=  \lim_{\varepsilon \rightarrow 0 }(g(u_{\varepsilon})-g(u)).
\end{equation*}
By Green's formula, it follows that
\begin{multline*}
\int_{\Omega}\int_{0}^{T} \left(  ^{ab}_{0}D_{t}^{\alpha}\psi 
- \triangle \psi \right) v dx dt \\
+ \int_{0}^{T}\int_{\partial \Omega}\left(\frac{\partial 
\psi}{\partial \nu}+\beta \psi +lu\right)v ds dt  
=\lim_{\varepsilon \rightarrow 0 }(g(u_{\varepsilon})-g(u)).
\end{multline*}
We conclude that $\psi$ satisfies the system
\begin{equation*}
\begin{gathered}
^{ab}_{0}D_{t}^{\alpha}\psi - \triangle \psi 
= \lim_{\varepsilon \rightarrow 0 }(g(u_{\varepsilon})-g(u)),\\
\frac{\partial \psi}{\partial \nu}+ \beta \psi + l u = 0
\quad \mbox{ on } \partial \Omega.
\end{gathered}
\end{equation*}
Set $g(u_{\varepsilon})-g(u)=(I) + (II)$ with
$$
(I) := \frac{\lambda}{\left(\int_{\Omega} f(u_{\varepsilon})\, dx\right)^{2}}
\int_{\Omega}\frac{f(u_{\varepsilon})-f(u)}{\varepsilon} \cdot v dx
$$
and
$$
(II) := \frac{\lambda}{\varepsilon} \left(
\frac{1}{\left(\int_{\Omega} f(u_{\varepsilon})\, dx\right)^{2}}
- \frac{1}{\left(\int_{\Omega} f(u)\, dx\right)^{2}}\right)
\int_{\Omega} f(u) v \, dx.
$$
We can reformulate $(II)$ as follows:
\begin{equation*}
\begin{split}
(II) &= \frac{\lambda}{\varepsilon} \frac{( \int_{\Omega} f(u)\, dx)^{2}
- ( \int_{\Omega} f(u_{\varepsilon})\, dx )^{2}}{(
\int_{\Omega} f(u)\, dx )^{2} ( \int_{\Omega} f(u_{\varepsilon})\,
dx )^{2}} \int_{\Omega}f(u) v dx\\
& = \lambda \int_{\Omega}
\frac{(f(u)-f(u_{\varepsilon}))}{\varepsilon}dx \times
\frac{\int_{\Omega}(f(u)+f(u_{\varepsilon}))dx}{( \int_{\Omega}
f(u_{\varepsilon})\, dx )^{2} ( \int_{\Omega} f(u)\, dx )^{2}}
\int_{\Omega}f(u) v dx \, .
\end{split}
\end{equation*}
Using the weak convergence \eqref{eq23}, we can prove that
\begin{equation*}
(II) \rightarrow \frac{-2 \lambda \int_{\Omega} f(u)v dx }{(
\int_{\Omega} f(u)\, dx )^{3}} \int_{\Omega} f'(u) \psi dx \mbox{
as } \varepsilon \rightarrow 0.
\end{equation*}
Similarly, 
\begin{equation*}
(I) \rightarrow \frac{\lambda}{( \int_{\Omega} f(u)\, dx )^{2}}
\int_{\Omega} f'(u) \psi v dx
 \mbox{ as } \varepsilon \rightarrow 0.
\end{equation*}
We conclude that $\psi$ verifies
\begin{equation*}
\begin{gathered}
^{ab}_{0}D_{t}^{\alpha}\psi - \triangle \psi 
= \frac{-2\lambda \int_{\Omega} f'(u) \psi dx}{( \int_{\Omega}
f(u)\, dx )^{3}} f(u) + \frac{\lambda f'(u) \psi}{( \int_{\Omega}
f(u)\, dx )^{2}} \quad \mbox{ in } \Omega, \\
\frac{\partial \psi}{\partial \nu}+ \beta \psi + l u = 0
\quad \mbox{ on } \partial \Omega.
\end{gathered}
\end{equation*}
This ends the proof of Theorem~\ref{thm31}.
\end{proof}

% ----------------------------------------------

\subsection{Derivation of the adjoint system}

To get the optimality system, we need first to derive 
the adjoint operator associated with $\psi$.
Let $v$ be an enough smooth function defined in $Q_{T}$.
By the first equation of \eqref{sys}, we have
\begin{multline*}
\int_{\Omega}\int_{0}^{T} \left(  ^{ab}_{0}D_{t}^{\alpha}\psi 
- \triangle \psi \right) v dx dt \\
=\frac{-2\lambda \int_{\Omega} f'(u) \psi dx}{\left( 
\int_{\Omega}f(u)\, dx\right)^{3}} \int_{Q_{T}} f(u) v dx dt 
+ \frac{\int_{Q_{T}}\lambda f'(u) \psi v dx dt}{\left(
\int_{\Omega} f(u)\, dx\right)^{2}} \mbox{ in } \Omega.
\end{multline*}
Integrating by parts, one has
\begin{equation*}
\begin{split}
\int_{\Omega}\int_{0}^{T} \, ^{ab}_{0}D_{t}^{\alpha}\psi. v dt dx 
&= -  \int_{\Omega} \int_{0}^{T} \, ^{ab}_{T}D_{t}^{\alpha}v. \psi dt dx\\
&\quad + \frac{B(\alpha)}{1-\alpha} \int_{\Omega} v(x, T)  
\int_{0}^{T} \psi E_{\alpha, \alpha}[-\gamma (T-t)^{\alpha}] dt.
\end{split}
\end{equation*}
Then,
\begin{multline*}
\int_{\Omega}\int_{0}^{T} \left(  
-^{ab}_{T}D_{t}^{\alpha}v - \triangle v \right ) \psi dx dt 
+ \frac{B(\alpha)}{1-\alpha} \int_{\Omega} v(x, T)  \int_{0}^{T} 
\psi E_{\alpha, \alpha}[-\gamma (T-t)^{\alpha}] dt\\
=\frac{-2\lambda \int_{\Omega} f'(u) \psi dx}{\left(\int_{\Omega}
f(u)\, dx\right)^{3}} \int_{Q_{T}} f(u) \varphi dx dt 
+ \frac{\int_{Q_{T}}\lambda f'(u) \psi \varphi dx dt}{\left(
\int_{\Omega} f(u)\, dx\right)^{2}} \mbox{ in } \Omega.
\end{multline*}
Introducing the boundary and initial conditions
$$
\frac{\partial v}{\partial \nu} + \beta v = 0
\mbox{ on } \partial \Omega \times (0, T),
\quad v(x, T) = 0,
$$
then function $v$ satisfies the adjoint system given by
\begin{equation}
\label{eq10}
\begin{gathered}
- ^{ab}_{T}D_{t}^{\alpha}v - \triangle v
= \frac{-2\lambda \int_{\Omega} f(u) \varphi dx}{( \int_{\Omega}
f(u)\, dx )^{3}} f'(u)+ \frac{\lambda  f'(u) \varphi}{(
\int_{\Omega} f(u)\, dx )^{2}}+1  \mbox{ in } Q_{T},\\
\frac{\partial v}{\partial \nu} + \beta v = 0
\mbox{ on } \partial \Omega \times (0, T),\\
v(T) = 0,
\end{gathered}
\end{equation}
where the $1$ appears from differentiation
of the integrand of $J(\beta)$ with respect
to the state $u$.

\begin{remark}
\label{rmk}
Given an optimal control $\beta \in U_{M}$ and the corresponding state $u$, 
the existence of solution to the adjoint system can be established by imposing 
additional regularity conditions on the electrical conductivity and following 
the same procedure we have followed for the existence results of \eqref{eq1}.
\end{remark}

% ----------------------------------------------

\subsection{Derivation of the optimality system}

Gathering equation \eqref{eq1} and the adjoint system
\eqref{eq10}, we obtain the following optimality system:
\begin{equation}
\label{eq11}
\begin{gathered}
u_{t}- \triangle u = \frac{\lambda f(u)}{\left(\int_{\Omega}
f(u)\, dx\right)^{2}}, \\
- ^{ab}_{T}D_{t}^{\alpha}v  - \triangle   v
= \frac{-2\lambda \int_{\Omega} f(u) v dx}{\left(\int_{\Omega}
f(u)\, dx\right)^{3}} f'(u)+ \frac{\lambda  f'(u) v}{\left(
\int_{\Omega} f(u)\, dx\right)^{2}}+1  \mbox{ in } Q_{T},\\
\frac{\partial u}{\partial \nu} + \beta u = 0
\mbox{ on } \partial \Omega \times (0, T),\\
\frac{\partial v}{\partial \nu} + \beta v = 0
\mbox{ on } \partial \Omega \times (0, T),\\
v(T)  = 0, \quad u(0)=u_0.
\end{gathered}
\end{equation}

\begin{remark}
The existence of solution to the optimality system \eqref{eq11}
follows from the existence of solution to the state system \eqref{eq1}
and the adjoint system \eqref{eq10}, combined with the existence of optimal
control.
\end{remark}

% --------------------------------------------------

\section{Conclusion}

In this paper we investigated an optimal control problem 
for a nonlocal thermistor problem  with a fractional time derivative
with nonlocal nonsingular Mittag--Leffler kernel. We proved existence
and uniqueness of the control. The optimality system describing the
optimal control was discussed. 

% ----------------------------------------------

\section*{Acknowledgements}

The authors were supported by the \emph{Center for Research
and Development in Mathematics and Applications} (CIDMA)
of University of Aveiro, through Funda\c{c}\~ao
para a Ci\^encia e a Tecnologia (FCT),
within project UID/MAT/04106/2019.

% ----------------------------------------------

% ----------------------------------------------

\end{document}